\definecolor{myurlcolor}{rgb}{0,0,0.4}
\definecolor{mycitecolor}{rgb}{0,0.5,0}
\definecolor{myrefcolor}{rgb}{0.5,0,0}
\newtheorem{theorem}{Theorem}[section]
\newtheorem{remark}[theorem]{Remark}
\newtheorem{definition}[theorem]{Definition}
\newtheorem*{proof*}{Proof}
\newcommand{\be}{\begin{equation}}
\newcommand{\ee}{\end{equation}}
\newcommand{\bea}{\begin{eqnarray}}
\newcommand{\eea}{\end{eqnarray}}
\newcommand{\dd}{{\rm d}}
\newcommand{\de}{\partial}
\title{A coisotropic embedding theorem for pre-multisymplectic manifolds}
\author{L. Schiavone$^{1,2}$ \href{https://orcid.org/0000-0002-1817-5752}{\includegraphics[scale=0.7]{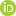}} \\ 
\footnotesize{$^{1}$\textit{Dipartimento di Matematica e Applicazioni Renato Caccioppoli, Università degli Studi di Napoli Federico II}}  \\
\footnotesize{$^{2}$\textit{ e-mail: \texttt{luca.schiavone@unina.it}}}
}
\date{}
\begin{document}

\maketitle

\begin{abstract}
\noindent Motivated by applications in Geometric Field Theory, we prove a coisotropic embedding theorem à là Gotay \cite{Gotay-CoisotropicEmbedding-1982} for pre-multisymplectic manifolds.
\end{abstract}

\tableofcontents

\section*{Introduction}

Multisymplectic geometry has emerged as a fundamental framework for the geometric description of Field Theories, extending the role that symplectic geometry plays in Classical Mechanics (see \cite{Tulczyjew-Kjowksi-SymplecticFramework-1979, Binz-Sniatycki-Fisher-ClassicalFields-1988, Carinena-Crampin-Ibort-Multisymplectic-1991, Roman-Roy-Multisymplectic-2009, Krupka-VariationalGeometry-2015} and references therein).
In this setting, the tangent and cotangent bundles of a configuration space are replaced by the first-order jet bundle of a fibration and its dual, providing a natural arena for analyzing the equations of motion and symmetries of Field Theories. 
These jet bundles are examples of pre-multisymplectic manifolds, which generalize the notion of pre-symplectic manifold involving higher-degree forms.
In this framework, to any Lagrangian (or Hamiltonian) function, a pre-multisymplectic structure is associated, which allows for an intrinsic formulation of a variational principle providing the equations of motion of the theory.

\noindent In previous contributions \cite{Ciaglia+-JacobiParticles-2020, Ciaglia+-JacobiFields-2020, Ciaglia-DiCosmo-Ibort-Marmo-Schiavone-Zampini-Peierls1-2024, Ciaglia-DiCosmo-Ibort-Marmo-Schiavone-Zampini-Peierls2-2022, Ciaglia-DiCosmo-Ibort-Marmo-Schiavone-Zampini-Peierls3-2022}, we showed that, at least locally near a Cauchy hypersurface, the equations of motion can be formulated as (infinite-dimensional) pre-symplectic Hamiltonian systems. 
In this setting, the space of solutions inherits a pre-symplectic structure whose kernel reflects the kernel of the underlying pre-multisymplectic form, encoding the gauge symmetries of the theory.

\noindent To define a Poisson structure on the space of solutions, we employed a regularization technique based on \textit{M. Gotay}'s \textit{coisotropic embedding theorem} \cite{Gotay-CoisotropicEmbedding-1982} (see also \cite{Guillemin-Sternberg-SymplecticTechniques-1990} for an equivariant version of the theorem and \cite{Oh-Park-DeformationsCoisotropic-2005} for a more modern approach).
Gotay's theorem provides a powerful tool in symplectic geometry, ensuring that any pre-symplectic manifold can be embedded as a coisotropic submanifold into a symplectic manifold, referred to as its \textit{symplectic thickening}. 
In \cite{Ciaglia-DiCosmo-Ibort-Marmo-Schiavone-Zampini-Peierls1-2024, Ciaglia-DiCosmo-Ibort-Marmo-Schiavone-Zampini-Peierls2-2022, Ciaglia-DiCosmo-Ibort-Marmo-Schiavone-Zampini-Peierls3-2022}, we used Gotay's theorem to coisotropically embed the space of solutions of the equations of motion of Classical Field Theories into a symplectic manifold where a Poisson structure is canonically defined. 
We then studied whether and how this Poisson structure projects back to a Poisson structure on the space of solutions. 

\noindent Building on the observation that Field Theories can locally be expressed as (infinite-dimensional) pre-symplectic Hamiltonian systems, the author has also used the coisotropic embedding theorem to establish a one-to-one correspondence between symmetries and conserved quantities \cite{Ciaglia-DiCosmo-Ibort-Marmo-Schiavone-Zampini-Symmetry-2022}, and to address the inverse problem of the calculus of variations for a class of implicit differential equations \cite{Schiavone-InverseProblemElectrodynamics-2024, Schiavone-InverseProblemImplicit-2024}.

\noindent While effective in all the cases mentioned above, this regularization approach requires handling infinite-dimensional spaces, as the space of solutions is typically infinite-dimensional. 
To circumvent the technical challenges associated with infinite-dimensional spaces, it may be useful to develop a multisymplectic analogue of Gotay's theorem, allowing coisotropic regularization to be carried out directly at the finite-dimensional level of the underlying pre-multisymplectic manifold.

\noindent The aim of this manuscript is to prove a coisotropic embedding theorem for pre-multisymplectic manifolds. 
Specifically, we will show that any pre-multisymplectic manifold can be embedded as a coisotropic submanifold into a larger multisymplectic manifold. 

\noindent It is worth pointing out that recent work has focused on the interplay between coisotropic submanifolds and multisymplectic geometry, particularly in the opposite direction taken in this manuscript, namely that of multisymplectic reduction. 
In particular, in \cite{deLeon-Izquierdo-CoisotropicReduction-2024} the authors analyze coisotropic submanifolds of pre-multisymplectic manifolds and study conditions under which they inherit multisymplectic structures.

\noindent The structure of this paper is as follows. 
In \cref{Sec: Preliminaries}, we recall preliminary notions about pre-multisymplectic manifolds that will be used throughout the manuscript. 
\cref{Sec: Coisotropic embeddings of pre-multisymplectic manifolds} is devoted to the statement and proof of the main theorem, and \cref{Sec: Application} provides an example to illustrate the main construction of the theorem.

\section{Preliminaries}
\label{Sec: Preliminaries}

\noindent Given a smooth $d$-dimensional differential manifold $\mathcal{M}$, we will usually denote by
\be
\left\{\, q^j \,\right\}_{j=1,...,d}
\ee
a system of local coordinates on it.

\noindent Recall the following definitions.

\begin{definition}[\textsc{Multisymplectic manifold}]
A $k$-\textbf{plectic manifold} (or \textbf{multisymplectic manifold}) is a smooth differential manifold $\mathcal{M}$ equipped with a closed and non-degenerate differential $k$-form $\omega$.
\end{definition}

\begin{definition}[\textsc{Pre-multisymplectic manifold}]
A \textbf{pre-}$k$-\textbf{plectic manifold} (or \textbf{ pre-multi-symplectic manifold}) is a smooth differential manifold $\mathcal{M}$ equipped with a closed differential $k$-form $\omega$.
\end{definition}

\begin{definition}[\textsc{$\ell$-multisymplectic orthogonal} \cite{Cantrijn-Ibort-DeLeon-Premultisymplectic-1999}]
Given a multisymplectic manifold $(\mathcal{M},\,\omega)$ and a submanifold $\mathcal{N} \subset \mathcal{M}$, the $\ell$-\textbf{multisymplectic orthogonal} of $\mathcal{N}$ in $\mathcal{M}$ at $n\in \mathcal{N}$ is
\be \label{Eq: ell-perp}
\mathbf{T}_n\mathcal{N}^{\perp,\,\ell} \,=\,\left\{\, V \in \mathbf{T}_n\mathcal{M} \;\;:\;\; i_{V \wedge W_1 \wedge ... \wedge W_\ell} \omega \,=\, 0 \,,\;\;\; \forall \,\, W_1,\,...,\,W_\ell \in \mathbf{T}_n\mathcal{N} \,\right\} \,,
\ee
where, with a slight abuse of notation, the tangent vectors $W_1,\,...,\,W_\ell$ in the contraction above denote the tangent vectors $\mathfrak{i}_\star W_1,\,...,\,\mathfrak{i}_\star W_\ell$, $\mathfrak{i}$ denoting the canonical embedding of $\mathcal{N}$ into $\mathcal{M}$.
\end{definition}

\noindent Evidently
\be
\mathbf{T}_n\mathcal{N}^{\perp,\,j} \subseteq \mathbf{T}_n\mathcal{N}^{\perp,\,i} \,, \;\;\; \text{if } \,\, i > j \,,
\ee
and
\be
\mathbf{T}_n\mathcal{N}^{\perp,\,\ell} \,=\, \mathbf{T}_n \mathcal{M} \,,\;\;\; \text{if } \,\, \ell > \operatorname{dim} \mathbf{T}_n \mathcal{N} \,.
\ee

\begin{definition}[\textsc{$\ell$-Coisotropic submanifold} \cite{Cantrijn-Ibort-DeLeon-Premultisymplectic-1999}] \label{Def: ell-coisotropic submanifold}
A submanifold $\mathcal{N}$ of a multisymplectic manifold $(\mathcal{M},\, \omega)$ is $\ell$-coisotropic if $\mathbf{T}_n\mathcal{N}^{\perp,\, \ell} \subseteq \mathbf{T}_n \mathcal{N}\,,\;\;\; \forall \,\, n \in \mathcal{N}$.
\end{definition}    

\begin{definition}[\textsc{Bundle of $k$-forms}]
Given a smooth differential manifold $\mathcal{M}$, we will denote by $\Lambda^{k}(\mathcal{M})$ the bundle of differential $k$-forms on $\mathcal{M}$, namely the vector bundle over $\mathcal{M}$ whose typical fibre at $m \in \mathcal{M}$ is $\bigwedge^{k} \mathbf{T}^\star_m \mathcal{M}$.
We will denote by $\pi$ the canonical projection $\pi \;:\;\; \Lambda^{k}(\mathcal{M}) \to \mathcal{M}$.
\end{definition}

\noindent We will usually denote by 
\be
\left\{\, q^j,\, p_{j_1...j_k} \,\right\}_{j,j_1,...,j_k=1,...,d}
\ee
a system of local coordinates on $\Lambda^{k}(\mathcal{M})$ adapted to the system of coordinates on $\mathcal{M}$ mentioned above, where the fibered coordinates $p_{j_1...j_k}$ have to be understood as the $k$-forms
\be
p_{j_1...j_k} \dd q^{j_1} \wedge ... \wedge \dd q^{j_k} \,.
\ee

\noindent Consider a pre-$k$-plectic manifold $(\mathcal{M},\, \omega)$.
The kernel of $\omega$ is, at each $m \in \mathcal{M}$, a subspace of $\mathbf{T}_m \mathcal{M}$, denoted by $K_m$.
We will always assume $\omega$ to have constant rank, so that $K_m$ and $K_n$ are isomorphic for any pair $m,\, n \in \mathcal{M}$.
Given that $\omega$ is closed and of constant rank, by means of \textit{Frobenius theorem} one shows that $K_m$ provides a completely integrable distribution on $\mathcal{M}$, so that there exists a foliation of $\mathcal{M}$ such that the tangent space to each leaf at each point $m \in \mathcal{M}$ coincides with $K_m$ and there exists a unique leaf passing through any point of $\mathcal{M}$.
We refer to \cite{Candel-Conlon-Foliations-2000} for the basic notions about foliations we will use along the manuscript.
Sometimes, we will consider on $\mathcal{M}$ a system of local coordinates adapted to such a foliation
\be
\left\{\, q^j \,\right\}_{j=1,...,d} \,=\, \left\{\, x^a,\, f^A \,\right\}_{a=1,...,l;A=1,...,r} \,,
\ee
where $l$ is the rank of $\omega$ and $r$ is the dimension of its kernel, so that $l + r = d$. 
Coordinates $x^a$ are a system of coordinates on the space of leaves of the foliation, namely, locally each leaf is a level set of the type
\be
\mathcal{F}_{c} \,=\, \left\{\, x^a \,=\, c^a \,, \;\;\; c^a \in \mathbb{R} \;\;\; \forall\,\, a=1,...,l \,\right\} \,,
\ee
whereas coordinates $f^A$ (for fixed values of the $x^a$'s) individuate a point on each leaf.
In this system of coordinates, $K_m$ reads
\be
K_m \,=\, \langle \left\{\, \frac{\de}{\de f^A}\,\right\}_{A=1,...,r}\rangle \,,
\ee
where the symbol $\langle \, \cdot \, \rangle$ denotes the linear span of the vectors appearing inside the angle brackets.

\noindent A complement to $K_m$ at each $m \in \mathcal{M}$ is not canonically defined.
Indeed, each choice of a complement $W_m$ such that
\be
\mathbf{T}_m \mathcal{M} \,=\, K_m \oplus W_m \,,
\ee
amounts to the choice of a \textit{connection} on $\mathcal{M}$.

\begin{definition}[\textsc{Connection on $\mathcal{M}$}]
A \textbf{connection} on $\mathcal{M}$ is an idempotent\footnote{By \textit{idempotent} we mean that $P^2 = P$.} smooth $(1,\,1)$-tensor field on $\mathcal{M}$ whose image, at each $m \in \mathcal{M}$, is $K_m$. 
\end{definition}

\noindent Locally, a connection can be written as
\be
P \,=\, (\dd f^A - P^A_a \dd x^a) \otimes \frac{\de}{\de f^A} \,,
\ee
where the functions $P^A_a$ are the so-called \textit{connection coefficients}.
By means of $P$, the tangent space to $\mathcal{M}$ at each point splits into the direct sum of tangent vectors in the image of $P$ and tangent vectors in the kernel of $P$.
Being a $(1,\,1)$-tensor field on $\mathcal{M}$, $P$ defines a map from vector fields on $\mathcal{M}$ to vector fields on $\mathcal{M}$ via contraction.
The image of such a map reads
\be
K_m \,=\, \langle\left\{\, V_A \,:=\, \frac{\de}{\de f^A}\biggr|_m\,\right\}_{A=1,...,r}\rangle \,,
\ee
and is usually referred to as the space of \textit{vertical vectors}, whereas the kernel reads
\be
H_m \,=\, \langle\left\{\, H_a \,:=\, \frac{\de}{\de x^a}\biggr|_m + P^A_a(m) \frac{\de}{\de f^A}\biggr|_m \,\right\}_{a=1,...,l}\rangle \,,
\ee
and is usually referred to as the space of \textit{horizontal vectors}.
The pointwise splitting $\mathbf{T}_m \mathcal{M} = K_m \oplus H_m$ extends to a global splitting of the tangent bundle $\mathbf{T}\mathcal{M} = \mathbf{V}^P \oplus \mathbf{H}^P$ because both $\mathbf{V}^P$ and $\mathbf{H}^P$ are smooth vector sub-bundles. 
The vertical bundle $\mathbf{V}^P$ corresponds to the kernel of the pre-multisymplectic form $\omega$. 
Since $\omega$ is a smooth form assumed to have constant rank, its kernel is a well-defined smooth sub-bundle. 
The horizontal bundle $\mathbf{H}^P$ is defined as the kernel of the connection $P$. 
By construction, $P$ is a smooth $(1,\,1)$-tensor field. 
As a projector onto a constant-rank bundle, $P$ also has constant rank, and thus its kernel $\mathbf{H}^P$ is also a smooth vector sub-bundle. 
The direct sum of two smooth sub-bundles that span the tangent bundle at every point constitutes a global vector bundle splitting.
Sections of $\mathbf{V}^P$ (resp. $\mathbf{H}^P$) are called \textit{vertical} (resp. \textit{horizontal}) vector fields.

\noindent Given any connection $P$, the related $(1,\,1)$-tensor field $R \,=\, \mathbf{1}-P$, where $\mathbf{1}$ denotes the identity operator, provides a complementary splitting to that induced by $P$, where the horizontal space coincides with the vertical space of $P$ and vice versa.
We will denote by 
\be
\mathbf{T}\mathcal{M} \,=\, \mathbf{V}^R \oplus \mathbf{H}^R \,, 
\ee
the splitting of $\mathbf{T}\mathcal{M}$ induced by $R$.

\noindent The splitting generated by $P$ on vector fields induces a splitting for differential $k$-forms on $\mathcal{M}$.
Indeed, given a $k$-form $\alpha$ at some point of $\mathcal{M}$, it can always be written as
\be
\alpha \,=\, \alpha_P^\parallel + \alpha_P^\perp \,,
\ee
where 
\be
\alpha^\parallel_P \,:=\, \alpha(\underbrace{P(\,\cdot\,),...,P(\,\cdot\,)}_{\text{k times}}) \,, 
\ee
and
\be
\alpha^\perp_P \,:=\, \alpha - \alpha^\parallel_P \,.
\ee
We will refer to $\alpha^\parallel_P$ and $\alpha^\perp_P$ as the \textit{parallel} and \textit{transversal} components of $\alpha$ associated with the connection $P$.

\noindent In the system of local coordinates chosen, if one writes the $k$-form in terms of the basis of differential $1$-forms
\be
\left\{\, \dd x^a,\, \dd f^A - P^A_a \dd x^a \,=:\, \theta^A \,\right\}_{a=1,...,l;A=1,...,r} \,,
\ee
dual to the basis of vertical and horizontal vector fields associated with $P$, namely such that
\be
\begin{split}
\dd x^a \left(\frac{\de}{\de x^b} + P^A_b \frac{\de}{\de f^A}\right) \,=\, \delta^a_b \,, \qquad \dd x^a \left(\, \frac{\de}{\de f^B}\,\right) \,=\, 0 \,, \\
\theta^A \left(\frac{\de}{\de x^b} + P^A_b \frac{\de}{\de f^A}\right) \,=\, 0 \,, \qquad \theta^A \left(\, \frac{\de}{\de f^B}\,\right) \,=\, \delta^A_B \,,
\end{split}
\ee
one has
\be
\begin{split}
\alpha \,=\, &\alpha_{A_1...A_k} \theta^{A_1} \wedge ... \wedge \theta^{A_k} + \\
&\alpha_{a_1 A_2 ... A_k} \dd x^{a_1} \wedge \theta^{A_2} \wedge ... \wedge \theta^{A_k} + \\ 
&\;\; \vdots \\
&\alpha_{a_1 ... a_k} \dd x^{a_1} \wedge ... \wedge \dd x^{a_k} \,. 
\end{split}
\ee
In this system of local coordinates, one gets
\be
\alpha^\parallel_P \,=\, \alpha_{A_1...A_k} \theta^{A_1} \wedge ... \wedge \theta^{A_k} \,,
\ee
and
\be
\alpha^\perp_P \,=\, \alpha_{a_1 A_2 ... A_k} \dd x^{a_1} \wedge \theta^{A_2} \wedge ... \wedge \theta^{A_k} + ... + \alpha_{a_1 ... a_k} \dd x^{a_1} \wedge ... \wedge \dd x^{a_k} \,.
\ee
Since this decomposition is defined pointwise using the smooth tensor field $P$, it is itself smooth. 
Building upon the established global splitting of the tangent bundle, this construction naturally defines a splitting of the bundle of $k$-forms into two smooth vector sub-bundles.
We will denote by
\be
\Lambda^k(\mathcal{M}) \,=\, {\Lambda^k}^\parallel_P(\mathcal{M}) \oplus {\Lambda^k}^\perp_P(\mathcal{M}) \,,
\ee
the splitting of the fibre bundle $\Lambda^k(\mathcal{M})$ into the direct sum of the fibre subbundles ${\Lambda^k}^\parallel_P(\mathcal{M})$ and ${\Lambda^k}^\perp_P(\mathcal{M})$ whose sections are parallel (resp. transversal) differential $k$-forms with respect to $P$.
We will denote by 
\be
\left\{\, q^j,\, p_{A_1...A_k} \,\right\}_{j=1,...,d;A_1,...,A_k=1,...,r} \,,
\ee
a system of local coordinates on ${\Lambda^k}^\parallel_{P}(\mathcal{M})$ and by
\be
\left\{\, q^j,\, p_{A_1 ... A_k},\, p_{a_1 A_2 ... A_k},\,...,\, p_{a_1 ... a_k} \,\right\}_{j=1,...,d;A_1,...,A_k=1,...,r} \,,
\ee
a system of local coordinates on ${\Lambda^k}^\perp_P(\mathcal{M})$.

\noindent Analogously, by means of the connection $R$, one can construct the splitting
\be
\alpha \,=\, \alpha^\parallel_R + \alpha^\perp_R \,,
\ee
where
\be
\alpha^\parallel_R \,:=\, \alpha(\underbrace{R(\,\cdot\,),...,R(\,\cdot\,)}_{\text{k times}}) \,, 
\ee
and
\be
\alpha^\perp_R \,:=\, \alpha - \alpha^\parallel_R \,.
\ee
In the system of local coordinates chosen
\be
\alpha^\parallel_R \,=\, \alpha_{a_1 ... a_k} \dd x^{a_1} \wedge ... \wedge \dd x^{a_k} \,,
\ee
while
\be
\begin{split}
\alpha^\perp_R \,=\, &\alpha_{A_1...A_k} \theta^{A_1} \wedge ... \wedge \theta^{A_k} + \\
&\alpha_{a_1 A_2 ... A_k} \dd x^{a_1} \wedge \theta^{A_2} \wedge ... \wedge \theta^{A_k} + \\
&\;\;\vdots \\
&\alpha_{a_1...a_{k-1}A_k} \dd x^{a_1} \wedge... \wedge \dd x^{a_{k-1}} \wedge \theta^{A_k} \,.
\end{split}
\ee

\noindent We will denote by
\be
\Lambda^k(\mathcal{M}) \,=\, {\Lambda^k}^\parallel_R(\mathcal{M}) \oplus {\Lambda^k}^\perp_R(\mathcal{M}) \,,
\ee
the splitting of the fibre bundle $\Lambda^k(\mathcal{M})$ into the direct sum of the fibre subbundles ${\Lambda^k}^\parallel_R(\mathcal{M})$ and ${\Lambda^k}^\perp_R(\mathcal{M})$ whose sections are parallel (resp. transversal) differential $k$-forms with respect to $R$.
Clearly, if $k > l$, ${\Lambda^k}^\parallel_R(\mathcal{M})$ is empty, and $\Lambda^k(\mathcal{M}) \,=\, {\Lambda^k}^\perp_R(\mathcal{M})$.

\noindent We will denote by
\be
\left\{\, q^j,\, p_{a_1...a_k} \,\right\}_{j=1,...,d;a_1,...,a_k=1,...,r} \,,
\ee
a system of local coordinates on ${\Lambda^k}^\parallel_{R}(\mathcal{M})$ and by
\be
\left\{\, q^j,\, p_{A_1 ... A_k},\, p_{a_1 A_2 ... A_k},\,...,\, p_{a_1 ... a_{k-1} A_k} \,\right\}_{j=1,...,d;a_k=1,...,r;A_k=1,...,l} \,,
\ee
a system of local coordinates on ${\Lambda^k}^\perp_R(\mathcal{M})$.

\section{Coisotropic embeddings of pre-multisymplectic manifolds}
\label{Sec: Coisotropic embeddings of pre-multisymplectic manifolds}

We are now ready to prove a coisotropic embedding theorem à là Gotay for pre-multisymplectic manifolds.

\begin{theorem} \label{Thm: coisotropic embedding}
Let $(\mathcal{M}, \omega)$ be a pre-$k$-plectic manifold, with $k > 2$.
Assume $K \,=\, \mathrm{ker}\, \omega$ to have constant rank.
Then, there exists a multisymplectic manifold $\tilde{\mathcal{M}}$, referred to as the \textbf{multisymplectic thickening} of $\mathcal{M}$, and an embedding $\mathfrak{i} \;:\;\; \mathcal{M} \hookrightarrow \tilde{\mathcal{M}}$, such that:
\begin{enumerate}
    \item $\tilde{\mathcal{M}}$ is equipped with a multisymplectic $k$-form $\tilde{\omega}$.
    \item $(\mathcal{M}, \omega)$ is a $(k-1)$-coisotropic submanifold of $(\tilde{\mathcal{M}}, \tilde{\omega})$ and $\mathfrak{i}^\star \tilde{\omega} \,=\, \omega$.
\end{enumerate}
\end{theorem}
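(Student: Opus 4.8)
The plan is to mimic Gotay's construction \cite{Gotay-CoisotropicEmbedding-1982}, replacing the characteristic bundle of a pre-symplectic form by a suitable bundle of $(k-1)$-forms and Gotay's canonical $1$-form by a tautological $(k-1)$-form. First I would fix a connection $P$ as in \cref{Sec: Preliminaries}, which provides the global splitting $\mathbf{T}\mathcal{M} = \mathbf{V}^P \oplus \mathbf{H}^P$ with $\mathbf{V}^P = K = \ker\omega$, together with the dual coframe $\{\dd x^a,\, \theta^A\}$. In these adapted coordinates $\omega$ carries only horizontal legs, i.e. $\omega = \omega^\parallel_R$, so that the entire degeneracy of $\omega$ is concentrated in the $r$ kernel directions $\de/\de f^A$; this is the structural fact, coming from closedness plus constant rank through Frobenius, on which everything rests.

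I would then take as multisymplectic thickening a neighbourhood of the zero section inside the subbundle $\pi \colon E \to \mathcal{M}$ of $\Lambda^{k-1}(\mathcal{M})$ whose fibre consists of those $(k-1)$-forms carrying exactly one transversal leg $\theta^A$ and $k-2$ horizontal legs $\dd x^a$; concretely $E \cong (\mathbf{V}^P)^\star \otimes \Lambda^{k-2}(\mathbf{H}^P)^\star$, with fibre coordinates $p^A_{a_1 \dots a_{k-2}}$. On $E$ I would restrict the tautological $(k-1)$-form of $\Lambda^{k-1}(\mathcal{M})$, namely $\Theta = p^A_{a_1 \dots a_{k-2}}\, \theta^A \wedge \dd x^{a_1} \wedge \dots \wedge \dd x^{a_{k-2}}$, and set $\tilde{\omega} := \pi^\star \omega - \dd\Theta$, taking $\mathfrak{i}$ to be the zero section. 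Closedness is immediate, since $\dd\tilde{\omega} = \pi^\star \dd\omega - \dd^2\Theta = 0$; and $\mathfrak{i}^\star\tilde{\omega} = \omega$, because $\mathfrak{i}^\star\Theta = 0$ forces $\mathfrak{i}^\star\dd\Theta = 0$ while $\pi \circ \mathfrak{i} = \id$.

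The two substantive points are non-degeneracy and $(k-1)$-coisotropy, both of which I would verify along the zero section first, extending non-degeneracy to a neighbourhood by openness. There $\dd\Theta = -\dd p^A_{a_1 \dots a_{k-2}} \wedge \theta^A \wedge \dd x^{a_1} \wedge \dots \wedge \dd x^{a_{k-2}}$, and the key device is to decompose $i_V\tilde{\omega}$ according to the \emph{type} of the resulting $(k-1)$-form (purely horizontal, or containing a $\theta$, or containing a $\dd p$): a horizontal vector is detected by $\pi^\star\omega$ through the non-degeneracy of $\omega$ on $\mathbf{H}^P$, a kernel vector $\de/\de f^A$ by the $\dd p^A \wedge \dd x^{\dots}$ term, and a fibre vector $\de/\de p^A_{a_1 \dots a_{k-2}}$ by the $\theta^A \wedge \dd x^{\dots}$ term; since these three families of forms are linearly independent, $i_V\tilde{\omega} = 0$ forces $V = 0$. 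For coisotropy I would test $V \in \mathbf{T}_n\mathcal{M}^{\perp,\, k-1}$ against the tuples $(\de/\de f^B,\, H_{b_1}, \dots, H_{b_{k-2}})$: the pairing isolates precisely the fibre component $p^B_{b_1 \dots b_{k-2}}$ of $V$, so that $V$ must be tangent to the zero section, giving $\mathbf{T}_n\mathcal{M}^{\perp,\, k-1} \subseteq \mathbf{T}_n\mathcal{M}$.

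I expect the main obstacle to be the correct choice of the fibre: it must be large enough that $i_{\de/\de f^A}\tilde{\omega} \neq 0$ for every kernel direction (which fails for a purely horizontal fibre, and is what rules out the naive pullback bundle), yet structured so that every added fibre direction is paired by $\tilde{\omega}$ with a tuple of vectors tangent to $\mathcal{M}$, which is exactly what forces coisotropy rather than a larger orthogonal. Carrying precisely one transversal leg is the balance that makes both hold, and organizing the whole verification through the type decomposition of forms associated with $P$ is what keeps the two conditions transparent. As a consistency check, for $k = 2$ the fibre reduces to $(\ker\omega)^\star$, one has $\Theta = p_A \theta^A$, and $\tilde{\omega} = \pi^\star\omega - \dd p_A \wedge \theta^A$ recovers Gotay's symplectic thickening.
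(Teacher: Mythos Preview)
Your argument is correct, but your thickening is \emph{not} the one the paper builds. You take $E \cong (\mathbf{V}^P)^\star \otimes \Lambda^{k-2}(\mathbf{H}^P)^\star$, the bundle of $(k-1)$-forms with \emph{exactly one} vertical leg $\theta^A$; the paper instead takes the full bundle ${\Lambda^{k-1}}^\perp_R(\mathcal{M})$ of $(k-1)$-forms with \emph{at least one} vertical leg, carrying all the fibre coordinates $p_{A_1\dots A_{k-1}},\, p_{a_1 A_2\dots A_{k-1}},\,\dots,\, p_{a_1\dots a_{k-2} A_{k-1}}$. Your $E$ is the last summand only, so for $r=\dim K \geq 2$ your thickening is strictly smaller. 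Both choices prove the existence statement (the paper itself remarks that the multisymplectic thickening is not unique), and your type-decomposition argument for non-degeneracy and the coisotropy test against $(\de/\de f^B, H_{b_1},\dots,H_{b_{k-2}})$ go through exactly as you describe. The trade-off is this: your minimal fibre yields the smallest extension and makes the $k=2$ consistency check with Gotay immediate, but you pay for it by having to restrict to a tubular neighbourhood of the zero section (the curvature terms $p\,\dd\theta^A$ spoil the clean type decomposition away from $p=0$); the paper's larger bundle costs more auxiliary coordinates but buys global non-degeneracy of $\tilde{\omega}$ on all of ${\Lambda^{k-1}}^\perp_R(\mathcal{M})$, a point the paper singles out as a genuine departure from the classical $k=2$ case.
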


\begin{proof}
\noindent Consider a $d$-dimensional pre-$k$-plectic manifold $(\mathcal{M},\,\omega)$.
In the system of coordinates adapted to the foliation induced by the kernel of $\omega$ described in \cref{Sec: Preliminaries}, $\omega$ reads:
\be
\omega \,=\, \omega_{a_1...a_k}(x) \dd x^{a_1} \wedge ... \wedge \dd x^{a_k} \,.
\ee

\noindent Consider the bundle ${\Lambda^{k-1}}^\perp_R(\mathcal{M})$ described in \cref{Sec: Preliminaries}, with the system of local coordinates 
\be
\left\{\, q^j,\, p_{A_1 ... A_k},\, p_{a_1 A_2 ... A_k},\,...,\, p_{a_1 ... a_{k-1} A_k} \,\right\}_{j=1,...,d;a_k=1,...,r;A_k=1,...,l} \,.
\ee
Let us also denote by $\tau: {\Lambda^{k-1}}^\perp_R(\mathcal{M}) \to \mathcal{M}$ the canonical bundle projection.
Consider the $(k-1)$-form on ${\Lambda^{k-1}}^\perp_R(\mathcal{M})$ defined by
\be
{\Theta_0^{(k-1)}}_p(V_1,...,V_{k-1}) \,=\, p(\tau_\star V_1,...,\tau_\star V_{k-1}) \,,
\ee
where $p$ has to be understood as a point in ${\Lambda^{k-1}}^\perp_R(\mathcal{M})$ on the left hand side and as a (transversal) $(k-1)$-covector on $\mathcal{M}$ on the right hand side, $\tau_\star$ is the push-forward of $\tau$, and $V_j \in \mathbf{T}_p {\Lambda^{k-1}}^\perp_R(\mathcal{M})\,,\;\;\; \forall \,\, j=1,...,k-1$.
In the system of local coordinates chosen
\be
\begin{split}
{\Theta_0^{(k-1)}} \,=\, &p_{A_1\,...\,A_{k-1}} \theta^{A_1} \wedge ... \wedge \theta^{A_{k-1}} + \\ 
&p_{a_1 \, A_2 \,...\, A_{k-1}} \dd x^{a_1} \wedge \theta^{A_2} \wedge ... \wedge \theta^{A_{k-1}} +\\
&\;\; \vdots \\
&p_{a_1 \,...\, a_{k-2} \, A_{k-1}} \dd x^{a_1} \wedge ... \wedge \dd x^{a_{k-2}} \wedge \theta^{A_{k-1}} \,.
\end{split}
\ee

\noindent Consider on ${\Lambda^{k-1}}^\perp_R(\mathcal{M})$ the following $k$-form
\be
\begin{split}
\tilde{\omega} \,:=\, \tau^\star \omega + \dd \Theta^{(k-1)}_0 \,.
\end{split}
\ee

\noindent The $k$-form $\tilde{\omega}$ is closed by construction.

\noindent To prove that it is also non-degenerate, we will consider its contraction along a vector field $X$ written in the basis
\[
\left\{\, H_a = \frac{\de}{\de x^a} + {P_x}^A_a \frac{\de}{\de f^A},\, V^A = \frac{\de}{\de f^A},\, \frac{\de}{\de p_{A_1 \,...\, A_{k-1}}},\, \frac{\de}{\de p_{a_1\,A_2\,...\,A_{k-1}}},\,...\,,\, \frac{\de}{\de p_{a_1\,...\,a_{k-2}\,A_{k-1}}} \,\right\} \,,
\]
namely
\be
\begin{split}
X \,=\, X^a H_a + X^A V_A + &X_{A_1 \,...\, A_{k-1}} \frac{\de}{\de p_{A_1 \,...\, A_{k-1}}} + \\
&X_{a_1 \, A_2 \,...\, A_{k-1}} \frac{\de}{\de p_{a_1\,A_2\,...\,A_{k-1}}} + \\
& \;\; \vdots \\
&X_{a_1\,...\,a_{k-2}\,A_{k-1}} \frac{\de}{\de p_{a_1\,...\,a_{k-2}\,A_{k-1}}} \,,
\end{split}
\ee
and prove that 
\[
i_X \tilde{\omega} \,=\, 0 \;\; \forall \,\, X \in \mathfrak{X}({\Lambda^{k-1}}^\perp_R(\mathcal{M}))\;\; \implies \;\; X = 0
\]
The contraction $i_X \widetilde{\omega}$ reads
\be \label{Eq: contraction multisymplectic tickening}
\begin{split}
i_X \widetilde{\omega} \,=\,  &{C^0}_{A_1 \,...\, A_{k-1}} \, \theta^{A_1} \wedge ... \wedge \theta^{A_{k-1}} + \\
&{C^1}_{a_1 \, A_2 \,...\, A_{k-1}} \, \dd x^{a_1} \wedge \theta^{A_2} \wedge \,...\, \wedge \theta^{A_{k-1}} + \\
&\;\; \vdots \\
&{C^{k-1}}_{a_1 \,...\, a_{k-1}} \, \dd x^{a_1} \wedge \,...\, \dd x^{a_{k-1}} \, + \\
& {D^0}^{A_1} \, \dd p_{A_1 \,...\, A_{k-1}} \wedge \theta^{A_2} \wedge \,...\, \wedge \theta^{A_{k-1}} + \\
&{D^1_0}^{a_1} \, \dd p_{a_1 \,A_2\,...\, A_{k-1}} \wedge \theta^{A_2} \wedge \,...\, \wedge \theta^{A_{k-1}} + \\ 
&{D^1_1}^{A_2} \dd p_{a_1 \, A_2 \,...\, A_{k-1}} \wedge \dd x^{a_1} \wedge \theta^{A_3} \wedge \,...\, \wedge \theta^{A_{k-1}} + \\
& \;\; \vdots \\
&{D^{k-1}_{k-2}}^{a_1} \dd p_{a_1 \,...\, a_{k-2} A_{k-1}} \wedge \dd x^{a_2} \wedge ... \wedge \dd x^{a_{k-2}} \wedge \theta^{A_{k-1}} + \\
&{D^{k-1}_{k-1}}^{A_{k-1}} \dd p_{a_1 \,...\, a_{k-2} A_{k-1}} \wedge \dd x^{a_1} \wedge ... \wedge \dd x^{a_{k-2}} \,. 
\end{split}
\ee
In particular, ${D^0}^{A_1}$ reads
\be
{D^0}^{A_1} \,=\, - X^{A_1} \,,
\ee
and, since
\be
\dd p_{A_1 \,...\, A_{k-1}} \wedge \theta^{A_2} \wedge \,...\, \wedge \theta^{A_{k-1}} 
\ee
is independent of all the other forms of the above decomposition, the condition $i_X \widetilde{\omega} \,=\, 0\,,\;\;\; \forall \,\, X$ gives
\be \label{Eq: XA=0}
X^A \,=\, 0 \,,\;\; \forall \,\, A=1,...,l \,.
\ee
The latter condition implies
\begin{align}
{D^1_1}^{A_2} \,=\, -X^{A_2} \,=\, 0 \,,\\
{D^1_0}^{a_1} \,=\, -X^{a_1} \,.
\end{align}
Again, given that
\be
\dd p_{a_1 \,A_2\,...\, A_{k-2}} \wedge \theta^{A_2} \wedge \,...\, \wedge \theta^{A_{k-1}}
\ee
is independent of all the other forms appearing in \eqref{Eq: contraction multisymplectic tickening}, the condition $i_X \widetilde{\omega} \,=\, 0\,,\;\;\; \forall \,\, X$ implies
\be \label{Eq: Xa=0}
X^a \,=\, 0 \,,\;\; \forall \,\, a=1,...,r \,.
\ee
Conditions \eqref{Eq: XA=0} and \eqref{Eq: Xa=0} imply in turn that all the ${D^j_k}$ vanish and that
\begin{align}
{C^0}_{A_1 \,...\, A_{k-1}} \,&=\, X_{A_1\,...\,A_{k-1}} \,, \\
{C^1}_{a_1\,A_2 \,...\,A_{k-1}} \,&=\, X_{a_1 \,A_2 \,...\,A_{k-1}} \,, \\
& \vdots \\
{C^{k-2}}_{a_1 \,...\, a_{k-2}\,A_{k-1}} \,&=\, X_{a_1 \,...\, a_{k-2}\,A_{k-1}} \,,\\
{C^{k-1}}_{a_1 \,...\, a_{k-1}} \,&=\, 0 \,.
\end{align}
Since the $C^j_k$ are coefficients of independent $(k-1)$-forms, the condition $i_X \widetilde{\omega} \,=\, 0 \,,\;\; \forall \,\, X$ implies
\begin{align}
X_{A_1\,...\,A_{k-1}} \,&=\, 0 \,, \label{Eq: Xmu0=0} \\
X_{a_1 \,A_2 \,...\,A_{k-1}}\,&=\,0 \,, \label{Eq: Xmu1=0} \\
& \vdots  \nonumber \\
X_{a_1 \,...\, a_{k-2}\,A_{k-1}} \,&=\, 0 \,. \label{Eq: Xmuk-2=0}
\end{align}
The chain of conditions \eqref{Eq: XA=0}, \eqref{Eq: Xa=0}, \eqref{Eq: Xmu0=0}, \eqref{Eq: Xmu1=0}, ..., \eqref{Eq: Xmuk-2=0} prove that 
\be
i_X \widetilde{\omega} \,=\, 0 \,,\;\; \forall\,\, X \in \mathfrak{X}({\Lambda^{k-1}}^\perp_R(M)) \;\; \implies \;\; X \,=\,0 \,,
\ee
namely, that $\widetilde{\omega}$ is non-degenerate, and, thus, multisymplectic.

\noindent Thus, the multisymplectic manifold we were searching for is $\tilde{\mathcal{M}} \,=\, {\Lambda^{k-1}}^\perp_R(\mathcal{M})$ and $\mathfrak{i}$ is the zero-section of $\tau$.

\noindent A straightforward computation also shows that
\be
\mathfrak{i}^\star \tilde{\omega} \,=\, \omega \,.
\ee

\noindent To prove that $\mathcal{M}$ is a $(k-1)$-coisotropic submanifold of ${\Lambda^{k-1}}^\perp_R(\mathcal{M})$, having in mind \cref{Def: ell-coisotropic submanifold}, consider the contraction $\tilde{\omega}_m(X,\, W_1,...,\,W_{k-1})$ for $m \in \mathcal{M}$, and $W_1,\, W_2,\,...,\, W_{k-1} \in \mathbf{T}_m \mathcal{M}$
\be
W_j \,=\, {W_j}^a H_a\bigr|_m + {W_j}^A \frac{\de}{\de f^A}\Bigr|_m \,.
\ee
The contraction above is
\be
\begin{split}
\tilde{\omega}_m(X,\, W_1,...,\,W_{k-1}) \,=\, &\tau^* \omega_m(X,\, W_1,...,\,W_{k-1}) \,+ \\
& X_{A_1\,...\,A_{k-1}} \left(\,\theta^{A_1} \wedge ... \wedge \theta^{A_{k-1}}\,\right)(W_1,...,\,W_{k-1}) \,+ \\
& X_{a_1\,A_2\,...\,A_{k-1}} \left(\, \dd x^{a_1} \wedge \theta^{A_2} \wedge ... \wedge \theta^{A_{k-1}} \,\right)(\,W_1,...,\,W_{k-1}) \,+\\
&\;\; \vdots \\
& X_{a_1\,...\,a_{k-2}\,A_{k-1}} \left(\, \dd x^{a_1} \wedge ... \dd x^{a_{k-2}} \wedge \theta^{A_{k-1}} \,\right)(W_1,...,\,W_{k-1}) \,.
\end{split}
\ee


\noindent First, consider all the $W_j$ to be vertical, that is, obeying ${W_j}^a \,=\, 0$.
The contraction above reads
\be
X_{A_1\,...\,A_{k-1}} \left(\,\theta^{A_1} \wedge ... \wedge \theta^{A_{k-1}}\,\right)(W_1,...,\,W_{k-1}) \,.
\ee
The condition that it has to vanish for all the $W_j$ gives
\be
X_{A_1\,...\,A_{k-1}} \,=\, 0 \,.
\ee
Let now $W_1$ be horizontal, i.e., $W_1 \,=\, {W_1}^a H_a$, and the remaining $W_j$ be vertical as in the previous case.
Here, the contraction reads
\be
X_{a_1\,A_2\,...\,A_{k-1}} \left(\, \dd x^{a_1} \wedge \theta^{A_2} \wedge ... \wedge \theta^{A_{k-1}} \,\right)(\,W_1,...,\,W_{k-1}) \,.
\ee
The fact that it has to be zero for all the $W_j$ gives 
\be
X_{a_1\,A_2\,...\,A_{k-1}} \,=\, 0 \,.
\ee
The same argument can be iterated by considering all the other possible choices for the $W_j$. 
One ends up with
\be
\begin{split}
X_{a_1\,a_2\,...\,A_{k-1}} \,&=\, 0 \,,\\
& \vdots \\
X_{a_1\,...\,a_{k-2}\,A_{k-1}} \,&=\, 0 \,,
\end{split}
\ee
proving that $X$ is tangent to $\mathcal{M}$ and, thus, that $\mathcal{M}$ is a $(k-1)$-coisotropic submanifold of a ${\Lambda^{k-1}}^\perp_R(\mathcal{M})$.
\end{proof}

\begin{remark}
Note that, unlike the classical case $k=2$ \cite{Gotay-CoisotropicEmbedding-1982}, the form $\tilde{\omega}$ remains non-degenerate throughout the entire ${\Lambda^{k-1}}^\perp_R(\mathcal{M})$. Consequently, there is no need to restrict to a tubular neighborhood of the zero-section of $\tau$ (as in \cite{Gotay-CoisotropicEmbedding-1982}) or to impose additional assumptions such as the flatness of $P$ (see \cite{Dubrovin-Giordano-Marmo-Simoni-PoissonPresymplectic-1993, Ciaglia-DiCosmo-Ibort-Marmo-Schiavone-Zampini-Peierls2-2022}) to ensure the non-degeneracy of $\tilde{\omega}$.
\end{remark}

\begin{remark}
Another key difference between the case $k>2$ analyzed here and the classical case $k=2$ concerns the uniqueness of the multisymplectic thickening.
Indeed, in \cite{Gotay-CoisotropicEmbedding-1982}, the symplectic thickening is not only guaranteed to exist but also proven to be unique (up to symplectomorphism).
In contrast, in the multisymplectic case, uniqueness generally cannot be established, as the following simple counterexample shows.

\noindent Consider the pre-$3$-plectic manifold $(\mathbb{R}^4,\, \omega)$ with coordinates 
\be
\left\{\, x^1,\, x^2,\, x^3,\, x^4 \,\right\} \,,
\ee
where
\be
\omega \,=\, \dd x^1 \wedge \dd x^2 \wedge \dd x^3 \,,
\ee
with kernel given by
\be
\mathrm{ker} \,\omega \,=\, \langle\, \left\{\, \frac{\de}{\de x^4} \,\right\} \,\rangle \,.
\ee
It can be embedded in both $(\mathbb{R}^5,\, \tilde{\omega})$ with coordinates
\be
\left\{\, x^1,\, x^2,\, x^3,\, x^4,\, x^5 \,\right\} \,,
\ee
where
\be
\tilde{\omega} \,=\, \dd x^1 \wedge \dd x^2 \wedge \dd x^3 + \dd x^1 \wedge \dd x^4 \wedge \dd x^5 \,,
\ee
and in $(\mathbb{R}^6,\, \tilde{\omega})$ with coordinates
\be
\left\{\, x^1,\, x^2,\, x^3,\, x^4,\, x^5,\, x^6 \,\right\} \,,
\ee
where
\be
\tilde{\omega} \,=\, \dd x^1 \wedge \dd x^2 \wedge \dd x^3 + \dd x^1 \wedge \dd x^4 \wedge \dd x^5 + \dd x^2 \wedge \dd x^4 \wedge \dd x^6 \,.
\ee 
They are both $3$-plectic manifolds, and one can easily check that the $(\mathbb{R}^4,\, \omega)$ is a $2$-coisotropic submanifold of both of them.

\noindent Therefore, one concludes that establishing an existence result—depending only on the (immaterial) choice of a complement, as in Gotay's construction—is the most canonical result one can achieve in the multisymplectic context, where uniqueness is not guaranteed.
\end{remark}

\section{Application: A Scalar Field Theory with Anisotropic Mass}
\label{Sec: Application}

As an application of \cref{Thm: coisotropic embedding}, we consider a scalar field theory on a 2-dimensional space-time in the multisymplectic Hamiltonian framework.
We refer to \cite{Ciaglia-DiCosmo-Ibort-Marmo-Schiavone-Zampini-Peierls1-2024} and references therein for the theory behind the multisymplectic approach to Hamiltonian field theories.

\noindent In the example we are considering, the space-time is the $2$-dimensional manifold $\mathbb{R}^2$ with coordinates $\left\{\,x,\,t\,\right\}$.
The multisymplectic phase space is the reduced dual of the first-order jet bundle of the trivial bundle $\pi \colon E \,=\, \mathbb{R} \times \mathbb{R}^2 \to \mathbb{R}^2$. 
It is a $5$-dimensional manifold $\mathcal{M}$ isomorphic to $\mathbb{R}^5$ with coordinates $\left\{\,x, t, u, \rho^x, \rho^t\right\}$, where $u$ represents the scalar field coordinate and $\rho^x$, $\rho^t$ are the so-called \textit{multimomenta} coordinates. 
The fields of the theory are the sections of the bundle $\mathcal{M} \to \mathbb{R}^2$, say
\be
\chi \;:\;\; (x,\,t) \mapsto (x,\,t,\, \phi(x,t),\, P^x(x,t),\, P^t(x,t)) \,.
\ee
Instead of the multisymplectic form
\be
\omega \,=\, \dd \rho^t \wedge \dd u \wedge \dd x + \dd\rho^x \wedge \dd u \wedge \dd t - \rho^t \dd \rho^t \wedge \dd x \wedge \dd t - \rho^x \dd \rho^x \wedge \dd x \wedge \dd t \,,
\ee
corresponding to the Hamiltonian 
\be
H \,=\, \frac{1}{2} \left(\, (\rho^x)^2 + (\rho^t)^2 \,\right) \,,
\ee
we will consider the pre-multisymplectic structure
\be
\omega = \dd \rho^x \wedge \dd u \wedge \dd t - \rho^x \dd \rho^x \wedge \dd x \wedge \dd t \,,
\ee
corresponding to a free scalar field in a singular limit where its mass becomes infinite in the time-like direction. 

\noindent The form $\omega$ is evidently closed, but it is degenerate.
Its kernel is easily seen to be
\be
K \,=\, \langle \left\{\, \frac{\de}{\de x} + \rho^x \frac{\de}{\de u},\, \frac{\de}{\de \rho^t} \,\right\} \rangle \,.
\ee

We now apply \cref{Thm: coisotropic embedding} for $k=3$. 
In this simple example, we will consider the connection $P$ to provide the following complement for $K$
\be
H \,=\, \langle \left\{\, \frac{\de}{\de t},\, \frac{\de}{\de u},\, \frac{\de}{\de \rho^x} \,\right\} \rangle \,.
\ee

\noindent The multisymplectic thickening is the manifold $\tilde{\mathcal{M}} = {\Lambda^2}^\perp_R(M)$, i.e., the bundle of transversal 2-forms w.r.t. $R \,=\, \mathbb{1} - P$.

\noindent For the connection chosen, the $1$-forms $\theta^A$ of \cref{Sec: Preliminaries} and \cref{Sec: Coisotropic embeddings of pre-multisymplectic manifolds} read $\{dx, d\rho^t\}$, namely
\be
P \,=\, \left(\, \frac{\de}{\de x} + \rho^x \frac{\de}{\de u} \,\right) \otimes \dd x + \frac{\de}{\de \rho^t} \otimes \dd \rho^t \,.
\ee
A transversal $2$-form is a $2$-form containing at least one $\theta^A$. 
This introduces $7$ auxiliary fields, which are the coordinates on the fibers of $\tilde{\mathcal{M}}$
\be
\left\{\, p_{xt},\, p_{xu},\, p_{x\rho^x},\, p_{x \rho^t},\, p_{\rho^t t},\, p_{\rho^t u},\, p_{\rho^t \rho^x} \,\right\} \,.
\ee
The manifold $\tilde{\mathcal{M}}$ thus reads $\mathbb{R}^{12}$ with coordinates 
\be
\left\{\,x,\,t, \,u,\, \rho^x,\, \rho^t,\, p_{xt},\, p_{xu},\, p_{x\rho^x},\, p_{x\rho^t},\, p_{\rho^t t},\, p_{\rho^t u},\, p_{\rho^t \rho^x} \,\right\} \,.
\ee

\noindent The new, non-degenerate $3$-form on $\tilde{\mathcal{M}}$ is $\tilde{\omega} = \tau^*\omega + d\Theta_0$, where $\Theta_0$ is the tautological 2-form:
\be
\begin{split}
\Theta_0 = &p_{xt} \dd x \wedge \dd t + p_{xu} \dd x \wedge \dd u + p_{x\rho^x} \dd x \wedge \dd \rho^x + p_{x\rho^t} \dd x \wedge \dd \rho^t + \\ 
+ & p_{\rho^t t} \dd \rho^t \wedge \dd t + p_{\rho^t u} \dd \rho^t \wedge \dd u + p_{\rho^t \rho^x} \dd \rho^t \wedge \dd \rho^x \,.
\end{split}
\ee

\subsection{Equations of motion}

Here, we will see the role of the regularization procedure in providing a fully determined system of equations of motion out of an implicit underdetermined one, showing that the regularization has the meaning of fixing the undetermined gauge in the equations of motion.

The equations of motion on $(\mathcal{M},\,\omega)$ are derived in the multisymplectic approach \cite{Ciaglia-DiCosmo-Ibort-Marmo-Schiavone-Zampini-Peierls1-2024} from the condition $\chi^\star(i_V \omega)=0$ for any vertical vector field on the fibration $\mathcal{M} \to \mathbb{R}^2$ 
\be
V\,=\,V_u \frac{\de}{\de u} + V_{\rho^x} \frac{\de}{\de \rho^x} + V_{\rho^t} \frac{\de}{\de \rho^t} \,.
\ee

\noindent A straightforward calculation shows that equations of motion read
\be
\begin{split}
\frac{\de P^x}{\de x} \,=\, 0 \,, \\
\frac{\de \phi}{\de x} \,=\, P^x \,.
\end{split}
\ee
As expected, since the system is degenerate, some degrees of freedom, the momentum $P^t$ in particular, are completely undetermined, namely, the system exhibits what is called a \textit{gauge symmetry}.
As a consequence, the field $\phi$ obeys
\be
\frac{\de^2 \phi}{\de x^2} \,=\, 0 \,,
\ee
but its evolution along the $t$-coordinate is completely free.

On the other hand, on the regularized manifold $(\tilde{\mathcal{M}},\, \tilde{\omega})$, the equations of motion provide a fully determined system where the gauge freedom is fixed.

\noindent In the regularized system, the additional degrees of freedom emerging will be considered as additional, non-physical fields playing a role that can be considered as the multisymplectic analogue of \textit{ghost fields} used in the symplectic framework.
Thus, on $(\tilde{\mathcal{M}},\, \tilde{\omega})$ equations of motion are derived from the condition $\tilde{\chi}^\star(i_{\tilde{V}} \tilde{\omega}) \,=\, 0$, for $\tilde{\chi}$ being a section of the bundle $\tilde{\mathcal{M}} \to \mathbb{R}^2$, say
\be
\tilde{\chi} \;:\;\; (x,\,t) \mapsto (x,\,t,\, \phi(x,t),\, P^x(x,t),\,P^t(x,t), \Pi_{xt}(x,t),\, \Pi_{xu}(x,t),...,\,\Pi_{\rho^t\rho^x}(x,t)) \,,
\ee
and for any vertical vector field $\tilde{V}$.

\begin{remark}
We are well aware that a rigorous application of this procedure raises a fundamental structural question: does the multisymplectic thickening $\tilde{\mathcal{M}}$ of a multisymplectic phase space $\mathcal{M}$ (which is, by construction, the reduced dual of a jet bundle) inherit a similar structure? In the corresponding Lagrangian formalism, this is equivalent to asking under which conditions the thickening of a jet bundle is itself a jet bundle.

\noindent A thorough geometric analysis of this problem is required to ensure the global consistency of the regularized field theory. However, such an investigation lies beyond the scope of this paper, whose main goal is to establish the coisotropic embedding theorem itself. This structural problem will be addressed in a future work. For the purely illustrative purpose of this example, we proceed by assuming that a consistent description of the regularized fields as sections $\tilde{\chi}$ of a suitable bundle $\tilde{\mathcal{M}} \to \mathbb{R}^2$ exists.
\end{remark}

\noindent A long but straightforward computation shows that in this case, the equations of motion read
\be
\begin{split}
\frac{\de \phi}{\de x} \,&=\, P^x \,,\\
\frac{\de \phi}{\de t} \,&=\, 0 \,,\\
\frac{\de P^x}{\de x} \,&=\, 0 \,,\\
\frac{\de P^x}{\de t} \,&=\, 0 \,,\\
\frac{\de P^t}{\de x} \,&=\, 0 \,,\\
\frac{\de P^t}{\de t} \,&=\, 0 \,,\\
\end{split}
\ee
that amount to
\be
\begin{split}
\frac{\de^2 \phi}{\de x^2} \,=\, 0 \,, \\
\frac{\de \phi}{\de t} \,=\, 0 \,,
\end{split}
\ee
where now, the dynamics of $\phi$ along the direction $t$ has been fixed to be constant.

\noindent Additionally, from $\tilde{\chi}^\star(i_{\tilde{V}} \tilde{\omega}) \,=\, 0$ one gets a set of "dynamical" equations for the additional, un-physical fields $\Pi_{xt},...,\,\Pi_{\rho^t \rho^x}$, that we avoid to write explicitely since they will be neglected.

This example illustrates how the coisotropic embedding procedure takes a pre-multisymplectic system with a non-trivial kernel, and, thus, with a gauge symmetry, and extends it to a larger, non-degenerate multisymplectic system where all the gauge freedoms are fixed.

\section*{Conclusions and Future Plans}

In this paper, we proved a coisotropic embedding theorem for pre-multisymplectic manifolds. 
We constructed a suitable multisymplectic thickening of any given pre-$k$-plectic manifold in which it is embedded as a $(k-1)$-coisotropic submanifold. 
It is the bundle of transversal $(k-1)$-forms on the pre-multisymplectic manifold associated with some connection.
This embedding may be employed for a finite-dimensional approach to the regularization techniques listed in the introduction, which traditionally require handling infinite-dimensional spaces.

\noindent Building on these results, several directions for future research emerge, as we are going to explain.

\paragraph{Applications to Field Theories}
We will apply the coisotropic embedding theorem proved to Field Theories, particularly in constructing a Poisson bracket on the space of solutions, as we already did in \cite{Ciaglia-DiCosmo-Ibort-Marmo-Schiavone-Zampini-Peierls1-2024, Ciaglia-DiCosmo-Ibort-Marmo-Schiavone-Zampini-Peierls2-2022, Ciaglia-DiCosmo-Ibort-Marmo-Schiavone-Zampini-Peierls3-2022}. 
We will study the relation between the two approaches and the potential advantages of the one based on the multisymplectic coisotropic embedding theorem proved in this manuscript.

\paragraph{Application to the Integrability of Implicit Partial Differential Equations}
Implicit partial differential equations can often be formulated using pre-multisymplectic structures. 
The regularization method introduced in this paper allows for the definition of a multisymplectic structure on an extended manifold, enabling (under some conditions that must be studied\footnote{A first, obvious, one is: if the pre-multisymplectic manifold one starts with is a first-order jet bundle (or its reduced dual), under which conditions the multisymplectic thickening is a first-order jet bundle as well?}) the formulation of an explicit PDE. 
Future work will investigate:
\begin{itemize}
    \item The correspondence between solutions of the original implicit PDE and the explicit PDE on the multisymplectic thickening.
    \item The conditions under which the integrability of the explicit PDE ensures the integrability of the implicit one.
    \item The development of tools to apply this framework to specific classes of implicit PDEs, thereby improving the understanding of their geometric and analytic properties.
\end{itemize}

\paragraph{Extending Local Results to Global Contexts}
Several results proven by the author, including those in \cite{Ciaglia-DiCosmo-Ibort-Marmo-Schiavone-Zampini-Symmetry-2022, Schiavone-InverseProblemElectrodynamics-2024, Schiavone-InverseProblemImplicit-2024}, rely on a local formulation of Field Theories as pre-symplectic Hamiltonian systems. 
Using the coisotropic embedding theorem, we aim to extend these results to global contexts by avoiding passing through the above-mentioned pre-symplectic Hamiltonian system, which is only locally defined close to some Cauchy hypersurface. 

\section*{Acknowledgements}

I am indebted to prof. A. Ibort for making me aware of the existence of the coisotropic embedding theorem and its potential applications as well as for the huge amount of inspiring discussions of the last years.
I am also indebted to prof. G. Marmo for encouraging me to think of alternative ways to depict the geometry behind Gotay's coisotropic embedding theorem.
I acknowledge financial support from Next Generation EU through the project 2022XZSAFN – PRIN2022 CUP: E53D23005970006.
I am a member of the GNSGA (Indam). 

\section*{Conflict of interests/Competing interests}

The author has no conflict of interests or competing interests to disclose.

\bibliographystyle{alpha}
\bibliography{Biblio}

\end{document}